\newtheorem{theorem}{Theorem}[section]
\newtheorem{lemma}[theorem]{Lemma}
\theoremstyle{definition}
\newtheorem{definition}[theorem]{Definition}
\theoremstyle{remark}
\newtheorem{remark}[theorem]{Remark}
\numberwithin{equation}{section}
\newtheorem{corollary}[theorem]{Corollary}
\newtheorem{proposition}[theorem]{Proposition}
\def\F2{{\mathbb F}_2}
\def\bR{{\mathbb R}}
\def\bb{{{\mathbf b}}}
\def\bk{{{\mathbf k}}}
\def\bx{{{\mathbf x}}}
\def\by{{{\mathbf y}}}
\def\b1{{{\mathbf 1}}}
\def\cP{{{\mathcal P}}}
\def\WF{{{\mathrm{WF}}}}
\def\Error{{{\mathrm{Err}}}}
\def\Vol{{{\mathrm{Vol}}}}
\begin{document}
\title{A computable figure of merit for quasi-Monte Carlo point sets}
\author{MAKOTO MATSUMOTO}
\address{Graduate School of Mathematical Sciences, 
University of Tokyo, Tokyo 153-8914 Japan}
\email{matumoto@ms.u-tokyo.ac.jp}
\thanks{Partially
supported by JSPS/MEXT Grant-in-Aid for Scientific Research
No.23244002, No.19204002, No.21654017, and JSPS Core-to-Core Program
No.18005.}
\author{MUTSUO SAITO}
\address{Department of Mathematics, Hiroshima University, Hiroshima
739-8526 Japan}
\email{saito@math.sci.hiroshima-u.ac.jp}
\thanks{Partially
supported by JSPS/MEXT Grant-in-Aid for Scientific Research 
No.21654004}
\author{KYLE MATOBA}
\address{Finance Department, UCLA Anderson School of Management, Los Angeles, USA}
\email{kyle.matoba.2014@anderson.ucla.edu}

\subjclass[2010]{Primary 11K38, 11K45, 65C05; Secondary  65D30, 65T50}
\date{\today}
\keywords{quasi-Monte Carlo, numerical integration, 
digital nets,
low discrepancy sequences, 
Walsh functions,
figure of merit, WAFOM, computational finance.
}

\begin{abstract}
Let $\cP \subset [0,1)^S$ be a finite point set of cardinality $N$
in an $S$-dimensional cube, 
and let $f:[0,1)^S \to \bR$
be an integrable function. A QMC integration of $f$
by $\cP$ is the average of values of $f$ at each point
in $\cP$, which approximates the integration of $f$
over the cube.
Assume that $\cP$ is constructed from 
an $\F2$-vector space $P\subset (\F2^n)^S$ by means of a digital net
with $n$-digit precision.
As an $n$-digit discretized version of Josef Dick's method,
we introduce Walsh figure of merit (WAFOM) $\WF(P)$ of $P$, 
which satisfies a Koksma-Hlawka type inequality, namely, 
QMC integration error is bounded by 
$C_{S,n}||f||_n \WF(P)$ under $n$-smoothness of $f$,
where $C_{S,n}$ is a constant depending only 
on $S,n$.

We show a Fourier inversion formula for $\WF(P)$
which is computable in $O(n SN)$ steps. This effectiveness enables us 
a random search for $P$ with small value of $\WF(P)$,
which would be difficult for other figures of merit such as
discrepancy. From an analogy to coding theory, we expect
that random search may find better point sets than mathematical
constructions. In fact, a na\"{i}ve search finds point sets $P$
with small $\WF(P)$. % just "Better performance"
In experiments, we show better performance of these point sets
in QMC integration than widely used QMC rules. 
We show some experimental evidence
on the effectiveness of our point sets to even non-smooth integrands
appearing in finance.
\end{abstract}

\maketitle

%
% Introduction
%
\section{Introduction}
\label{sec:introduction}

There is a strong analogy between coding theory and 
quasi-Monte Carlo (QMC) point sets (e.g., see 
\cite{SKRIGANOV01, SKRIGANOV06} \cite{CHEN-SKRIGANOV}
\cite{niederreiter:coding}).
In coding theory, it seems to be widely believed that 
to find good codes, a random search is easier than
mathematical explicit constructions. This is supported
by the fundamental theorem of 
Shannon showing that a randomly chosen code is 
good with high probability \cite{shannon1948}.
Further support is
the success of low density parity-check 
(LDPC) codes 
\cite{MacKay1999}. 
Thus, it is natural 
to consider a random search for good QMC point sets.
An obstruction is the 
lack of a {\em practically} computable measure for the ``goodness''
of QMC point sets. In coding theory, 
the minimum distance is a decisive and computable 
measure for the quality of the codes, so a random
search to optimize this value works well, if the cardinality 
of the code words is not large. 
For QMC point sets, 
the star-discrepancy (\cite{niederreiter:book} \cite{DICK-PILL-BOOK})
is hard to compute. Alternative
diaphony \cite{Zinterhof} and dyadic diaphony \cite{Hellekalek}
require $O(SN^2)$ operations, where $S$ is the dimension
and $N$ is the cardinality of the point sets, 
which would be practically difficult for large $N$ for a random search. 
Probably, the most important figure of merit on digital nets
is the $t$-value of
$(t,m,s)$-net introduced by Niederreiter
(\cite{niederreiter:book} \cite{DICK-PILL-BOOK}).
The $t$-value gives an upper bound on
star-discrepancy.
Its possible weakness is that $t$ assumes only an integer value, and thus may be 
too coarse to give a tight bound on the star-discrepancy.

In this paper we propose {\em Walsh figure of merit}
(WAFOM) as a practically computable measure of goodness of a QMC point
set from a digital net \cite[\S4.3]{niederreiter:book}.
An inversion formula (\ref{eq:WAFOM}) computes WAFOM 
in only $O(nSN)$ steps, where $n$ is the precision 
(see \S\ref{sec:discretization}) of
the point sets.
We can easily execute a random search minimizing
WAFOM. Our experiment yields QMC point sets which perform better than
some standard low discrepancy point sets, for QMC integration 
of a natural integrand
arising in computational finance.

Let us briefly recall the notion of QMC integration.
Let $\cP \subset [0,1)^S$ be a point set in 
an $S$-dimensional cube with finite cardinality 
$|\cP|=N$, and let $f:[0,1)^S \to \bR$
be an integrable function.
The quasi-Monte Carlo integration by $\cP$
is an approximation value
\begin{equation} \label{eq:QMC-integral}
I_\cP(f):=\frac{1}{|\cP|} \sum_{\bx \in \cP} f(\bx)
\end{equation}
of the actual integration
\begin{equation} \label{eqn:integral}
I(f):=\int_{[0,1)^S}f(\bx)d\bx.
\end{equation}
Thus, the QMC integration error is 
\begin{equation}\label{eq:QMC-error}
\Error(f; \cP):= |I(f)-I_\cP(f)|.
\end{equation}

A central idea in the theory of QMC
is to show an upper bound on this error of the form
$$
\Error(f; \cP) \leq V(f) D(\cP),
$$
where $V(f)$ is a quantity depending only on $f$
(not $\cP$) and $D(\cP)$ is a quantity depending
only on $\cP$, for a suitable class of integrand functions.
Then, one designs $\cP$ with a small value of $D(\cP)$,
which works for this class of functions.
In the case of the monumental Koksma-Hlawka inequality, 
$V(f)$ is the total variation of $f$,
$D(\cP)$ is the star-discrepancy of $\cP$, 
and the class of functions
is those with bounded variation.
In this case, there are many studies 
on the construction of point sets with small $D(\cP)$, 
in particular satisfying a conjectured lower bound 
of $O(N^{-1}(\log N)^{S-1})$ (see \cite{niederreiter:book}
\cite{DICK-PILL-BOOK} for these basics).

For a given integer $\alpha>1$, 
Dick \cite{Dick-Walsh} \cite{Dick-MCQMC} introduced 
the notion of $\alpha$-smooth functions, the 
$\alpha$-norm $||f||_\alpha$, and (he did not give a name
so we term it here as) Walsh figure-of-merit (WAFOM) 
$\WF_\alpha(\cP)$, so that
$$
\Error(f; \cP) \leq C_{S,n} ||f||_\alpha \cdot \WF_\alpha(\cP)
$$
holds, using Walsh functions (see \cite[\S4.2]{Dick-MCQMC})
for a class of point sets
(i.e., digital-nets). 
Dick constructed families of point sets 
with $\WF_\alpha(\cP)\leq O(N^{-\alpha}(\log N)^{\alpha S})$.
A difficulty is 
that to realize a large value of $\alpha$, $N$ becomes 
too large for practical QMC integration.

Our approach is as follows.
In \S\ref{sec:discretization}, we fix an integer $n$ 
called the {\em degree of discretization},
discretize $[0,1)$ into $2^{n}$ intervals, and 
approximate $f$ by a function $f_n$
which is constant on each small cube.
If we assume Lipschitz continuity of $f$, the approximation
error is $O(2^{-n})$.
In \S\ref{sec:preliminary}, we recall discrete Fourier transformation.
In \S\ref{sec:WAFOM}, 
we introduce an $n$-digit
discretized version $\WF(P)$ of $\WF_\alpha(\cP)$,
and present a result of Dick bounding
the integration error by WAFOM  in our discretized
context. In \S\ref{sec:explicit}, we give a Fourier inversion formula
for WAFOM 
that reduces the computational complexity to $O(nSN)$.
Using this formula, we search for point sets with
small value of WAFOM by random search.
In \S\ref{sec:experiments}, we show results of numerical experiments.
We find point sets with small $\WF(P)$: the values suggest that 
$\WF(P)=O(N^{-1-\beta})$ for some positive constant $\beta$
near one, even in a practical range of $N=2^{10}, 2^{11},\ldots, 2^{22}$.
Experimentation 
shows that these point sets perform better than some standard
low discrepancy sequences. 

\section{Discretization}\label{sec:discretization}
Let us fix a positive integer $n$, which is 
called the degree of discretization.
To simplify the analysis, we discretize
$I:=[0,1)$ into a finite set with cardinality $2^n$. 
It would be possible to do a similar analysis without such discretization
using Walsh functions \cite{Dick-MCQMC, Dick-Walsh}
\cite{DICK-PILL-BOOK}
\cite{Hellekalek}, but we do not pursue it here
and work only on the discrete case because 
we are interested in applications in digital computers
where such discretization is implicitly done.

The interval $I$ is decomposed into $2^n$ intervals
of equal length $[b, b+2^{-n})$, where 
\begin{equation}\label{eq:binary-fraction}
b = b_12^{-1}+\cdots +b_n2^{-n}, \quad b_1,\ldots,b_n \in \{0,1\}.
\end{equation}
Such a $b$ is called an {\em $n$-digit binary fraction}.

Let $\F2=\{0,1\}$ be the two-element field.
In $\F2$, arithmetic operations are done modulo 2.
Let $\F2^n$ be the space of $n$-dimensional row vectors:
$$
\F2^n = \{(b_1,\ldots,b_n) \ | \ b_i \in \F2 \}.
$$
The sum of two such vectors is computed componentwise
modulo 2.

There is a natural identification of a vector
$(b_1,\ldots,b_n)$ with an 
$n$-digit binary fraction $b$ as in (\ref{eq:binary-fraction}),
hence we have an identification of the three sets:
$\F2^n$, the set of $n$-digit binary fractions, and the set of
the $2^n$ intervals, by
$$
 (b_1,\ldots,b_n) 
 \mapsto b= b_12^{-1}+\cdots +b_n2^{-n}
 \mapsto I_b:=[b, b+2^{-n}).
$$

Let $V:=(\F2^n)^S$ be the set of $S \times n$ matrices with 
components in $\F2$. As usual, $B \in V$ is described as 
$B:=(b_{T,j})_{1\leq T \leq S, 1\leq j \leq n}$, with $b_{T,j}\in \F2$. To $B$, we associate $\bb=(b_1,\ldots, b_S) \in I^S$
with $b_T=b_{T,1}2^{-1}+\cdots+b_{T,n}2^{-n}$,
and then an $S$-dimensional cube
$I_B:=I_\bb := I_{b_1}\times I_{b_2} \times \cdots \times I_{b_S}$.
This gives a discretization of $I^S$ by $V=(\F2^n)^S$.

Let $f:I^S \to \bR$ be an integrable function.
We define its $n$-digit discrete approximation $f_n$ as 
$$
f_n: V \to \bR, \quad
f_n(B):=\frac{1}{\Vol(I_\bb)}\int_{I_\bb} f(\bx) d\bx.
$$
In other words, to $B \in V$, we associate
an $S$-dimensional cube consisting of $(x_1,\ldots,x_S)$
such that the first $n$ digits of 
the binary expansion of $x_T$ being equal to $(b_{T,1},\ldots,b_{T,n})$
for each $T$,
and $f_n(B)$ is the average value of $f$ over this cube.

It is easy to see
\begin{equation}\label{eq:integration-noerror}
\frac{1}{|V|}\sum_{B \in V} f_n(B)
=
\int_{I^S} f(\bx) d\bx.
\end{equation}

In the following, we assume that
each point in $\cP$
has coordinates in $n$-digit binary fractions.
Thus, $\cP \subset I^S$ corresponds to a subset $P \subset V$. 

We define the {\em $n$-th discretized} QMC integration of $f$ by $P$ as 
\begin{equation}\label{eq:discretized-QMC}
I_{P,n}(f):=\frac{1}{|P|}\sum_{B \in P}f_n(B).
\end{equation}

Note that this value is hard to compute in practice, since
each $f_n(B)$ is an integration,
but can be approximated by the usual QMC integration $I_{\cP}(f)$ 
as follows.

Let us define
$$
||f-f_n||_\infty:=
\sup_{B\in V, \bb \in I_B} |f_n(B)-f(b)|.
$$
This coincides with the usual supremum norm if $f_n$ is regarded as 
a (not necessarily continuous) function
on $I^S$ that takes the (constant) value $f_n(B)$ on the
hypercube $I_B$. From now on, we use the same notation 
$f_n$ for this function on $I^S$.

\begin{lemma}
\label{lemma:QMC_discretization}
The difference between the QMC integral (\ref{eq:QMC-integral})
and the discretized QMC integral
(\ref{eq:discretized-QMC})
is bounded by
$$
|I_\cP(f)-I_{P,n}(f)| \leq ||f-f_n||_\infty. 
$$
In particular, if $f$ is continuous with 
Lipschitz constant $K$, namely,
if for any $\bx,\bx' \in [0,1)^S$
$$
|f(\bx)-f(\bx')| \leq K ||\bx-\bx'||
$$
holds (where the right hand side is the Euclidean norm), 
then the above value is bounded by
$$||f-f_n|| \leq K\sqrt{S}2^{-n}.$$
\end{lemma}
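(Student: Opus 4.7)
The plan is straightforward: reduce both bounds to pointwise estimates of $|f - f_n|$ on each small cube $I_B$, then apply the triangle inequality.

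First I would unwind the definitions. By the standing assumption each point $\bx \in \cP$ has $n$-digit binary coordinates, so the identification $V \cong (\F2^n)^S$ described in the excerpt provides a bijection between $\cP$ and $P$ sending $\bx$ to the unique $B$ with $\bx \in I_B$. Writing both averages over the same index set via this bijection gives
\begin{equation*}
I_\cP(f) - I_{P,n}(f) = \frac{1}{|P|} \sum_{B \in P} \bigl( f(\bx_B) - f_n(B) \bigr),
\end{equation*}
where $\bx_B \in I_B$ is the point of $\cP$ corresponding to $B$. Since $f_n$ is defined to be constant on $I_B$, the term $f(\bx_B) - f_n(B)$ equals $f(\bx_B) - f_n(\bx_B)$ when we view $f_n$ as a function on $I^S$. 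Each summand is therefore bounded in absolute value by $\|f - f_n\|_\infty$, and the triangle inequality yields the first claim.

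For the Lipschitz refinement, I would work cube-by-cube. Fix $B \in V$ and any $\bx \in I_B$. Since $f_n(B)$ is the mean of $f$ over $I_B$, we can write
\begin{equation*}
f(\bx) - f_n(B) = \frac{1}{\Vol(I_B)} \int_{I_B} \bigl( f(\bx) - f(\by) \bigr)\, d\by.
\end{equation*}
Both $\bx$ and $\by$ lie in $I_B$, which is a product of $S$ intervals of length $2^{-n}$, so each coordinate of $\bx - \by$ is at most $2^{-n}$ in absolute value, giving $\|\bx - \by\| \le \sqrt{S}\,2^{-n}$. Applying the Lipschitz bound $|f(\bx) - f(\by)| \le K\|\bx - \by\|$ inside the integral and taking absolute values gives $|f(\bx) - f_n(B)| \le K\sqrt{S}\,2^{-n}$. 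Taking the supremum over $B$ and $\bx \in I_B$ produces $\|f - f_n\|_\infty \le K\sqrt{S}\,2^{-n}$, and combining with the first bound finishes the lemma.

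There is no real obstacle: the only thing worth checking carefully is that the bijection between $\cP$ and $P$ is consistent with the identification by which $f_n$ on $V$ is reinterpreted as a step function on $I^S$, so that the pointwise bound $|f(\bx_B) - f_n(B)| \le \|f - f_n\|_\infty$ is legitimate under the supremum norm stated in the excerpt.
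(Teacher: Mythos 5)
Your proposal is correct and follows essentially the same route as the paper: bound the difference of the two averages termwise by $\|f-f_n\|_\infty$, then control $|f(\bx)-f_n(B)|$ on each cube $I_B$ by the Lipschitz constant times the diameter $\sqrt{S}\,2^{-n}$. The only difference is that you make explicit, via the integral representation of the cube average, why $|f(\bx)-f_n(B)|$ is bounded by the oscillation of $f$ on $I_B$, a step the paper states without elaboration.
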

\begin{proof}
The left hand side of the first inequality
is bounded by the average of $|f(\bb)-f_n(\bb)|$
over $\bb \in \cP$, hence is bounded by the supremum norm
$||f-f_n||_\infty$.
For the second inequality, for any $\bb$ take $I_B$
that contains $\bb$. Then, 
since the diameter of $I_B$ is $\sqrt{S}2^{-n}$, 
$$
|f(\bb)-f_n(\bb)|\leq \sup_{\bx,\by \in I_B} |f(\bx)-f(\by)|
\leq K\cdot 2^{-n}\sqrt{S}.
$$
\end{proof}
By this lemma, when we take $n$ large enough so that
$K\cdot 2^{-n}\sqrt{S}$ is negligible compared to
the QMC integration error, then we may identify
$I_{P,n}(f)$ and $I_{\cP}(f)$. 
Although the above bound depends on $K$, in practice,
$n=30$ would be sufficient for typical QMC integration,
since QMC integration error would be much larger than
$K2^{-30}\sqrt{S}$. 
Further justification for this discretization is that,
in single precision arithmetic 
real numbers in $I$
are usually discretized as a 23-digits binary fraction, 
and hence in practical 
integrations, $n\geq 24$ would be sufficiently large.

After the above discretization, our integration 
is exactly a finite sum (\ref{eq:integration-noerror}), 
%$$
%I(f)=\frac{1}{|V|}\sum_{B \in V} f_n(B).
%$$
and for a point set $P \subset V$, our discretized QMC
integration is (\ref{eq:discretized-QMC}). 
%is 
%$$
%I_{P,n}(f):=\frac{1}{|P|}\sum_{B \in P} f_n(B),
%$$
Hence we define the $n$-discretized QMC integration error by
the difference:
\begin{equation}\label{eq:discretized-QMC-error}
\Error(f;P,n):=|I_{P,n}(f) - I(f)|,
\end{equation}
which we are going to 
use as a proxy for (\ref{eq:QMC-error}) with the justification
above.

\begin{remark}\label{rem:midpoint}
In QMC-integration, it would be better to pick the mid point 
$\bx$ in the hypercube $I_\bb$ and use $f(\bx)$ 
as an approximation of the average $f_n(\bb)$ 
of $f$ over $I_\bb$, instead of using the corner point
$\bb$. Thus, we should translate $\cP$ by
adding $(2^{-n-1},2^{-n-1},\ldots,2^{-n-1})$. This is reflected
in our experiments in \S\ref{sec:experiments}. 
\end{remark}

\section{Discrete Fourier transformation}\label{sec:fourier}
\subsection{Preliminary}\label{sec:preliminary}
This section recalls well-known facts on discrete Fourier transformations.
Recall that $V$ is the set of $S \times n$ matrices with
components in $\F2$. A subset $P\subset V$ is an $\F2$-linear
subspace if $P$ contains the zero matrix and closed
under summation as $\F2$-matrices (i.e. componentwise sum
modulo 2).  We henceforth assume that $P$ is $\mathbb{F}_2$-linear.

We define a (standard) inner product by
$$
V \times V \to \F2, \quad B, A \mapsto (B | A):=
\sum_{1\leq T\leq S, 1\leq j\leq n} b_{T,j}a_{T,j},
$$
where the summation in the right hand side is modulo 2.
For an $\F2$-linear subspace $P\subset V$,
we define its {\em orthogonal} subspace
$$
P^\perp := \{A \in V \ | \ (B | A)=0 \mbox{ for all } B \in P\}.
$$
This is a linear subspace of $V$, and we have
\begin{equation} \label{eqn:dimension}
\dim P + \dim P^\perp = \dim V = nS. 
\end{equation}
We define
$$
\langle B | A \rangle := (-1)^{(B|A)} \in \{1,-1\}.
$$
The following lemma is standard (see \cite{SKRIGANOV01} 
for a more extensive theoretical treatment 
in the context of coding theory and uniform distribution).
\begin{lemma} Let $P\subset V$ be a linear subspace.
Then,
$$
\sum_{B \in P} \langle B|A \rangle =
\left\{
\begin{array}{cc}
|P| & \mbox{ if } A \in P^\perp \\
0   & \mbox{ if } A \notin P^\perp
\end{array}
\right.
$$
\end{lemma}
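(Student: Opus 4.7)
The plan is to split on whether $A$ lies in $P^\perp$ and handle each case in turn.

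First, suppose $A \in P^\perp$. By definition, $(B\mid A) = 0$ for every $B \in P$, so $\langle B\mid A\rangle = (-1)^0 = 1$. Summing over $B \in P$ yields $|P|$, which settles this case immediately.

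Next, suppose $A \notin P^\perp$. Then by definition there exists some $B_0 \in P$ with $(B_0 \mid A) = 1$, equivalently $\langle B_0 \mid A\rangle = -1$. The key move is to observe that translation by $B_0$ is a bijection $P \to P$ (since $P$ is an $\F2$-linear subspace and $B_0 \in P$), and that the inner product is $\F2$-bilinear, so $(B + B_0 \mid A) = (B\mid A) + (B_0 \mid A)$ modulo $2$, which gives the multiplicative identity $\langle B+B_0 \mid A\rangle = \langle B\mid A\rangle \cdot \langle B_0\mid A\rangle = -\langle B\mid A\rangle$. Re-indexing the sum via $B \mapsto B + B_0$ therefore yields
$$
\sum_{B \in P} \langle B \mid A\rangle \;=\; \sum_{B \in P} \langle B + B_0 \mid A\rangle \;=\; -\sum_{B \in P} \langle B \mid A\rangle,
$$
so the sum equals its own negation and must vanish.

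There is no serious obstacle here; the only thing to be careful about is the bilinearity of $(\cdot \mid \cdot)$ modulo $2$ translating into multiplicativity of $\langle \cdot \mid \cdot\rangle$ as a $\{\pm 1\}$-valued character, and the fact that a nonzero character of a finite abelian group sums to zero is exactly what the bijection argument encodes. Both cases together give the stated dichotomy.
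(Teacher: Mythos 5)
Your proof is correct. The first case is identical to the paper's. In the second case you take a slightly different route: the paper notes that $B \mapsto (B|A)$ is a nonzero $\F2$-linear functional on $P$, so by the dimension formula its kernel is a hyperplane of dimension $\dim P - 1$; hence exactly $|P|/2$ elements contribute $+1$ and $|P|/2$ contribute $-1$, and the sum cancels. You instead pick a witness $B_0 \in P$ with $(B_0|A)=1$ and reindex the sum by the bijection $B \mapsto B+B_0$, concluding that the sum equals its own negation and so vanishes. Both arguments hinge on the same bilinearity of $(\cdot|\cdot)$; the paper's version makes the ``exactly half'' count explicit via linear algebra over $\F2$, while yours is the standard orthogonality-of-characters trick, which is marginally more general in that it would apply verbatim to any nontrivial character of a finite abelian group without reference to a vector-space structure. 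Either is a complete and acceptable proof.
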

\begin{proof}
If $A \in P^\perp$, then $\langle B|A \rangle=(-1)^{(B|A)}=1$ for
all $B \in P$, which settles the first case.
If $A \notin P^\perp$, then the map $P \to \F2$
defined by $B \mapsto (B|A)$ is nonzero and $\F2$-linear.
This and the dimension formula of linear algebra implies that
those $B \in P$ with $(B|A)=0$ has dimension $\dim P - 1$,
thus the number of such $B$ is $|P|/2$, and hence
is the same with the number of $B \in P$ with $(B|A)=1$,
and the sum cancels out. 
This implies the second case.
\end{proof}

For a function
$$
f: V \to \bR, 
$$
its \emph{discrete Fourier transform} $\hat{f}:V \to \bR$ is
defined by 
$$
\hat{f}(A):= \frac{1}{|V|}
\sum_{B \in V} f(B)\langle B|A \rangle.
$$
We have the Fourier expansion formula
\begin{equation}\label{eq:disc-walsh}
f(B) = \sum_{A \in V} \hat{f}(A)\langle B|A \rangle.
\end{equation}
In fact, the right hand side is 
$$
\sum_{A \in V}\left(\frac{1}{|V|} \sum_{B' \in V} f(B') \langle B'|A \rangle \langle B|A \rangle\right).
$$
If we consider the summation over $A$ first,
the sum of $\langle B'|A\rangle \langle B|A \rangle=\langle B'-B|A \rangle$ is
0 if $B'-B \notin V^\perp = \{0\}$, and $|V|$ if
$B'-B=0$. Thus, this sum is $f(B)$.
By averaging (\ref{eq:disc-walsh}) over $P$, we have
the following well-known theorem 
(see \cite{SKRIGANOV01}).
\begin{theorem}\label{th:poisson}(Poisson summation formula)

Let $P\subset V=(\F2^n)^S$ be a linear subspace.
For any $f:V \to \bR$, we have the identity
$$
\frac{1}{|P|}\sum_{B \in P} 
f(B) = 
\sum_{A \in V} \hat{f}(A)
 \left[\frac{1}{|P|}\sum_{B \in P}\langle B | A\rangle \right]
= \sum_{A \in P^\perp} \hat{f}(A).
$$
\end{theorem}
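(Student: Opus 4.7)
The plan is to derive both equalities directly from the Fourier expansion formula (\ref{eq:disc-walsh}) together with the orthogonality lemma immediately preceding the theorem. The author essentially telegraphs this by writing ``By averaging (\ref{eq:disc-walsh}) over $P$\dots,'' so the main task is to check that the two steps slot together cleanly.

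First I would take the identity
$$
f(B) = \sum_{A \in V} \hat{f}(A)\langle B|A \rangle,
$$
which has already been established, and average it over $B \in P$:
$$
\frac{1}{|P|}\sum_{B \in P} f(B) = \frac{1}{|P|}\sum_{B \in P}\sum_{A \in V} \hat{f}(A)\langle B|A \rangle.
$$
Since both sums are finite, Fubini/reindexing is trivial; swapping the order of summation yields
$$
\frac{1}{|P|}\sum_{B \in P} f(B) = \sum_{A \in V} \hat{f}(A)\left[\frac{1}{|P|}\sum_{B \in P}\langle B|A \rangle\right],
$$
which is the first equality in the statement.

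For the second equality, I would apply the preceding lemma to the bracketed inner sum. That lemma says $\sum_{B \in P}\langle B|A\rangle$ equals $|P|$ when $A \in P^\perp$ and $0$ otherwise, so $\frac{1}{|P|}\sum_{B \in P}\langle B|A\rangle$ is the indicator function of $P^\perp$. Substituting this collapses the outer sum over $V$ to a sum over $P^\perp$, giving
$$
\sum_{A \in V} \hat{f}(A)\left[\frac{1}{|P|}\sum_{B \in P}\langle B|A \rangle\right] = \sum_{A \in P^\perp} \hat{f}(A),
$$
as required.

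There is no real obstacle here: the proof is a one-line consequence of Fourier inversion plus the orthogonality lemma, and the only thing to be careful about is the order-of-summation swap, which is automatic because every sum in sight is finite. The entire argument amounts to expanding, interchanging summations, and invoking the lemma.
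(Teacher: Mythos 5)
Your proof is correct and follows exactly the route the paper intends: averaging the Fourier expansion formula over $P$, exchanging the two finite sums, and collapsing the inner sum via the orthogonality lemma. The paper itself only sketches this ("By averaging (\ref{eq:disc-walsh}) over $P$\dots"), so your write-up is simply the fully spelled-out version of the same argument.
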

We want to know the integration
$$
I(f)=\frac{1}{|V|}\sum_{B \in V} f_n(B) = \hat{f_n}(0).
$$
Recall that we choose our point set $P$
to be an $\F2$-linear subspace of $V$. 
Then, the discretized QMC
integration (\ref{eq:discretized-QMC}) of $f_n$ by $P$ 
is given by Theorem~\ref{th:poisson}:
\begin{equation}\label{eq:disc-QMC2}
\frac{1}{|P|}\sum_{B \in P} 
f_n(B) 
= \sum_{A \in P^\perp} \hat{f_n}(A).
\end{equation}
This implies that the integration error
of discretized QMC by $P$ is exactly
\begin{equation}\label{eq:exact-error}
\frac{1}{|P|}\sum_{B \in P} f_n(B) - \hat{f_n}(0)
=
\sum_{A \in P^\perp-\{0\}} \hat{f_n}(A),
\end{equation}
and an error bound is given by 
\begin{equation}\label{eq:integration-error}
\Error(f;P,n)=\left|\sum_{A \in P^\perp-\{0\}} \hat{f_n}(A)\right|
\leq \sum_{A \in P^\perp-\{0\}} |\hat{f_n}(A)|.
\end{equation}

\subsection{Estimation of the Fourier coefficients}\label{sec:WAFOM}
Once we have some estimation function $c(A)>0$ for
$A \in V$ such that
$$
|\hat{f_n}(A)| \leq C_f\cdot c(A)
$$
where $C_f$ is a constant depending only on $f$,
then by (\ref{eq:integration-error}) 
we have an upper estimate of the discretized QMC integration error:
\begin{equation}\label{eq:error-bound}
\Error(f;P,n) \leq C_f\cdot\sum_{A \in P^\perp-\{0\}} c(A).
\end{equation}
%Very roughly saying, the magnitude of $c(A)$ 
%if $A$ has more 1's, then it is smaller, and
%the effect of 1's decreases according to its digits.
%
%From Kyle's numerical experimentations, and J.Dick's
%analysis on the case of smooth integrands, it is highly
%plausible to work on the following (working)-hypothesis:
Dick gives such a bound
for $\alpha$-smooth functions 
(see \cite{Dick-MCQMC} for the definitions of $\alpha$-smooth functions
and the norm $||f||_\alpha$).
In our context, the integer $\alpha$ is set to $n$.
The following is a very special case of $\mu_\alpha(\bk)$
defined in \cite[\S4.1]{Dick-MCQMC} (the case when 
the base is 2, each coordinate of the integer vector
$\bk$ has a binary expansion of at most $n$ digits, and
$n=\alpha$).
Note that
\cite[Propagation rule (2)]{BALDEARUX-DICK-PILL}
(\cite[Propagation rule (ii)]{DICK-KRITZER})
states that a higher order net which achieves an improved
rate of convergence of the integration error
for a given $\alpha$ achieves an improved rate for all 
$1\leq \alpha' \leq \alpha$. Hence it may be 
possible that an improved rate of convergence
can also be achieved for $\alpha < n$.

\begin{definition}\label{def:mu}
For $A=(a_{T,j})_{1\leq T \leq S, 1\leq j \leq n} \in V$,
define 
$$
\mu(A):= \sum_{1\leq T\leq S,1\leq j\leq n} j\times a_{T,j}.
$$
Note that here each $a_{T,j}\in \{0,1\}$ is considered as an integer,
not an element of $\F2$.
\end{definition}

The following theorem is a part of the results of \cite[\S4.1]{Dick-MCQMC}.
\begin{theorem}(Josef Dick)\label{th:estimation-by-mu}
Let $f$ be an $n$-smooth function.
There is a constant $C_{S,n}$ depending only on $n$ and $S$
such that
$$
|\hat{f_n}(A)| \leq C_{S,n} ||f||_n \cdot 2^{-\mu(A)}.
$$
\end{theorem}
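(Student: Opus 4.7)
The plan is to reduce the bound on $\hat{f_n}(A)$ to a standard Walsh-Fourier coefficient estimate, then obtain the factor $2^{-\mu(A)}$ by repeated integration by parts.

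First I would observe that the discrete Fourier coefficient is itself a continuous integral. Using $|V| \cdot \Vol(I_B) = 2^{nS} \cdot 2^{-nS} = 1$ together with the definition of $f_n$,
$$
\hat{f_n}(A) = \frac{1}{|V|}\sum_{B\in V}\frac{1}{\Vol(I_B)}\int_{I_B} f(\bx)\,d\bx \cdot \langle B|A\rangle
= \int_{I^S} f(\bx)\, w_A(\bx)\, d\bx,
$$
where $w_A(\bx) := \langle B|A\rangle$ when $\bx \in I_B$. Because $(B|A) = \sum_{T,j} b_{T,j}a_{T,j}$, the function $w_A$ factors across coordinates as $w_A(\bx) = \prod_T w_{a_T}(x_T)$, and within each coordinate as a product of Rademacher functions $w_{a_T}(x_T) = \prod_{j : a_{T,j}=1} r_j(x_T)$, where $r_j(x) = (-1)^{b_j(x)}$. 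So $\hat{f_n}(A)$ is literally a Walsh-Fourier coefficient of $f$ in the usual sense.

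Next, for each coordinate $T$, I would estimate the one-dimensional integral against $\prod_{j: a_{T,j}=1} r_j(x_T)$ by iterated integration by parts in $x_T$, using the $n$-smoothness of $f$. The key ingredient is the elementary estimate that $r_j$ has period $2^{1-j}$ and mean zero, so a judiciously chosen (periodic) antiderivative is bounded by $2^{-j}$. More generally, for a product $r_{j_1}\cdots r_{j_k}$ with $j_1 < \cdots < j_k$, iterating integration by parts $k$ times (each step consuming one derivative of $f$ with respect to $x_T$ and consuming one Rademacher factor by integrating it together with the previously produced antiderivative) yields an overall bound of the form $C_k \cdot 2^{-(j_1+\cdots+j_k)} \cdot \sup\bigl|\partial^{k}_{x_T} f\bigr|$. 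Multiplying these one-dimensional estimates over $T=1,\ldots,S$ and applying Fubini collects a total factor $\prod_T \prod_{j:a_{T,j}=1} 2^{-j} = 2^{-\mu(A)}$, at the cost of mixed partial derivatives of $f$ of order at most $n$ in each variable, which are controlled by $\|f\|_n$. All combinatorial constants (including the prefactors $C_k$ from integration by parts) are then absorbed into a single $C_{S,n}$ that depends only on $n$ and $S$.

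The main obstacle is step two: carrying out the iterated integration by parts in a way that the antiderivative bounds actually multiply correctly across the $k$ active bits in a single coordinate. A naive choice of antiderivative only gains the single factor $2^{-j_1}$ from the coarsest Rademacher and needs to be refined by peeling off one Rademacher at a time while keeping all boundary terms zero via periodicity; this is exactly the delicate computation performed in Dick's higher-order Walsh theory cited in the theorem. Once that lemma is in place, the remainder of the argument is routine bookkeeping of constants and reduction via Fubini to the one-dimensional case.
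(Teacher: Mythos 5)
Your argument is in substance the paper's: the identification $\hat{f_n}(A)=\int_{I^S} f(\bx)\,\mathrm{wal}_{\bk}(\bx)\,d\bx$ with $k_T=\sum_j a_{T,j}2^{j-1}$ is exactly the paper's facts (1) and (2), and both you and the paper ultimately defer to Dick's Walsh-coefficient decay estimate for the factor $2^{-\mu(A)}$. Your integration-by-parts outline of that estimate is a fair sketch of Dick's lemma --- apart from one directional slip: the naive antiderivative of $r_{j_1}\cdots r_{j_k}$ already gains $2^{-j_k}$ from the \emph{finest} Rademacher (the product has mean zero on dyadic intervals of length $2^{-(j_k-1)}$), not $2^{-j_1}$ from the coarsest --- but since you, like the paper, end by citing Dick for the delicate iterated step, the two proofs coincide.
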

\begin{proof}
This is contained in Dick's theorem. We use the following two facts:
(1) the Walsh coefficient $\hat{f}(\bk)$ in \cite{Dick-MCQMC}
coincides with $\hat{f_n}(\bk)$ when the number of binary digits of each
component
of $\bk$ do not exceed $n$, because $f_n$ is the truncation of the Walsh
series upto the degree $n$, (2) $\hat{f_n}(\bk)=\hat{f_n}(A)$
holds for $A$ being the matrix obtained as coefficients of binary expansion of
$\bk$, by definition of Walsh functions.
\end{proof}
As in \cite[\S4.2]{Dick-MCQMC}, this theorem and
the formula (\ref{eq:integration-error}) give the error bound
$$
\Error(f;P,n) \leq C_{S,n} ||f||_n \cdot \sum_{A \in P^\perp -\{0\}}2^{-\mu(A)}.
$$
Thus, it is natural to define 
a kind of ``figure of merit'' (Walsh figure-of-merit
or WAFOM)
of the point set $P$: 
\begin{definition}\label{def:WAFOM} (WAFOM) 
$$
\WF(P):= 
\sum_{A \in P^\perp -\{0\}}2^{-\mu(A)}.
%=
%\sum_{A \in P^\perp -\{0\}}2^{-\sum_{1\leq T\leq S, 1\leq j\leq n} ja_{T,j}}.
$$
\end{definition}
Hence we have 
\begin{equation}\label{eq:bound-by-WAFOM}
\Error(f;P,n) \leq C_{n,s} ||f||_n \cdot \WF(P),
\end{equation}
suggesting that finding $P$ with small 
values of $\WF(P)$ would, \emph{a priori}, have better integration performance. 
%In \cite{Dick-MCQMC}, a construction of a family
%of $P$ using digital $(t,m,S)$-nets is proposed
%that attains $\WF(P) = O(N^{-\alpha}(\log N)^{\alpha S})$.

\begin{remark}
For several (continuous but not smooth) functions $f$, 
we numerically compute an approximate value of $\hat{f_n}(A)$ for small $n$ and $S$,
such as $n=5$ and $S=3$, for each $A$. For $2^{nS}$ distinct $A$ we
observe a tendency 
of $|\hat{f_n}(A)|$ being proportional to
$2^{-\mu(A)}$. 
(It is observed for many functions, 
including the non-differentiable function (\ref{eqn:asian_integral}) defined later.
On the other hand, for some special type of functions
such as the form of $f(\bx)=g_1(x_1)g_2(x_2)\cdots g_S(x_S)$,
the behaviour of $|\hat{f_n}(A)|$ is different.)
This supports 
the bound in Theorem~\ref{th:estimation-by-mu} and is evidence that the bound
(\ref{eq:bound-by-WAFOM}) may be adequate in practical applications for
some non-smooth functions 
(c.f. \cite{Dick-Walsh}), and may suggest a closer relation 
between the QMC integration error and WAFOM.
%\kcomment{Some elaboration on this vague point is perhaps in order}.
\end{remark}

So far, we have merely treated a discrete version of Dick's method.
Our new proposal here is to compute $\WF(P)$
by the formula (\ref{eq:WAFOM}) in the next section,
and find $P$ with small $\WF(P)$, by some random search of $P$.

\section{Inversion formula for $\WF(P)$}\label{sec:explicit}
For any $c:V \to \bR$, Theorem~\ref{th:poisson}
(with $P$ and $P^\perp$ interchanged) gives 
\begin{equation}\label{eq:poisson}
\frac{1}{|P^\perp|}\sum_{A \in P^\perp} c(A)
= 
\sum_{B \in P} \hat{c}(B).
\end{equation}

An explicit computation below shows the following.

\begin{theorem}\label{th:fourier-of-c}
Let $V$ be the set of $S\times n$ matrices with
coefficients in $\F2$, $B=(b_{T,j}) \in V$,
$A=(a_{T,j}) \in V$, 
and let $c(A):= 2^{-\mu(A)}$ as in Definition~\ref{def:mu}. 
Then
$$
\hat{c}(B)=
\frac{1}{|V|}
\prod_{1\leq T \leq S, 1\leq j \leq n} (1+(-1)^{b_{T,j}}2^{-j}).
$$
\end{theorem}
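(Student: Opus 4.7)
The plan is to unpack the definition of the discrete Fourier transform and observe that both $\mu(A)$ and $(B|A)$ are sums indexed over the pairs $(T,j)$, so the summand factors into a product of one-variable factors, and the sum over $A \in V$ likewise factors.

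Concretely, I would start from
\begin{equation*}
\hat{c}(B) = \frac{1}{|V|} \sum_{A \in V} 2^{-\mu(A)}\, (-1)^{(B|A)}.
\end{equation*}
Using the definitions $\mu(A)=\sum_{T,j} j\, a_{T,j}$ and $(B|A)=\sum_{T,j} b_{T,j} a_{T,j}$ (viewed as an integer sum, whose parity determines $(-1)^{(B|A)}$), the summand becomes
\begin{equation*}
\prod_{T,j} 2^{-j\, a_{T,j}} (-1)^{b_{T,j} a_{T,j}} = \prod_{T,j} \bigl((-1)^{b_{T,j}} 2^{-j}\bigr)^{a_{T,j}}.
\end{equation*}

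Next, since summing over $A \in V = (\F2^n)^S$ is the same as summing each entry $a_{T,j}$ independently over $\{0,1\}$, the sum distributes over the product:
\begin{equation*}
\sum_{A \in V} \prod_{T,j} \bigl((-1)^{b_{T,j}} 2^{-j}\bigr)^{a_{T,j}} = \prod_{T,j} \sum_{a_{T,j}\in\{0,1\}} \bigl((-1)^{b_{T,j}} 2^{-j}\bigr)^{a_{T,j}} = \prod_{T,j} \bigl(1 + (-1)^{b_{T,j}} 2^{-j}\bigr).
\end{equation*}
Dividing by $|V|$ yields the claimed formula.

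There is no real obstacle here; the only subtle point is making sure that one may freely swap the sum over $A$ with the product over $(T,j)$. This is justified by the standard identity $\sum_{x_1,\ldots,x_N} \prod_i f_i(x_i) = \prod_i \sum_{x_i} f_i(x_i)$ applied to the finite set of entries of $A$, which works because the factors in the product depend on disjoint coordinates of $A$. I would mention this factorization explicitly, as it is the one conceptual step that turns an $|V|$-term sum into the displayed $nS$-factor product.
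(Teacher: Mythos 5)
Your proposal is correct and follows essentially the same route as the paper: expand the definition of $\hat{c}$, write the summand as a product of per-entry factors $\bigl((-1)^{b_{T,j}}2^{-j}\bigr)^{a_{T,j}}$, and exchange the sum over $A$ with the product over $(T,j)$ to turn the $2^{nS}$-term sum into a product of $nS$ two-term sums. The factorization step you highlight is exactly the one the paper flags in its parenthetical remark after the computation.
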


\begin{corollary}\label{cor:bound-by-WAFOM}
%The integration error estimation plus $c(0)$ ($c(0)=1$)
%is
Let $P\subset V$ be a linear subspace.
We have
$$
\sum_{A \in P^\perp} c(A)
=|P^\perp|\sum_{B \in P}\hat{c}(B)
=
\frac{1}{|P|}
\sum_{B \in P}
\prod_{1\leq T \leq S, 1 \leq j\leq n} (1+(-1)^{b_{T,j}}2^{-j}).
$$

By subtracting $c(0)=1$, we have
\begin{equation}\label{eq:WAFOM}
\WF(P)=
\frac{1}{|P|}
\sum_{B \in P}
\left\{ 
\prod_{1\leq T \leq S, 1 \leq j\leq n} [(1+(-1)^{b_{T,j}}2^{-j})] -1 
\right\}.
\end{equation}
This is computable in $O(nSN)$ steps of arithmetic
operations in real numbers, where $N=|P|$.
\end{corollary}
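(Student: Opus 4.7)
The plan is to prove Theorem~\ref{th:fourier-of-c} by a direct factorization of the Fourier sum defining $\hat c$, then derive the corollary by invoking the Poisson summation formula (\ref{eq:poisson}) and doing simple bookkeeping.

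First I would unfold the definition of $\hat c$:
$$\hat c(B)=\frac{1}{|V|}\sum_{A\in V} 2^{-\mu(A)}\,(-1)^{(B|A)}.$$
Both factors in the summand are multiplicative over the $nS$ entries of $A$: by Definition~\ref{def:mu}, $2^{-\mu(A)}=\prod_{T,j} 2^{-j\,a_{T,j}}$, and by the bilinearity of $(\cdot|\cdot)$, $(-1)^{(B|A)}=\prod_{T,j}(-1)^{b_{T,j} a_{T,j}}$. Since summing $A$ over $V$ is the same as summing each $a_{T,j}$ independently over $\{0,1\}$, the sum factorizes as a product of one-variable sums:
$$\sum_{A\in V}\prod_{T,j} 2^{-j\,a_{T,j}}(-1)^{b_{T,j}a_{T,j}}=\prod_{T,j}\sum_{a\in\{0,1\}}\bigl(2^{-j}(-1)^{b_{T,j}}\bigr)^a=\prod_{T,j}\bigl(1+(-1)^{b_{T,j}}2^{-j}\bigr).$$
Dividing by $|V|$ gives the desired formula, finishing the theorem.

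For Corollary~\ref{cor:bound-by-WAFOM}, I would apply the Poisson summation formula (\ref{eq:poisson}) to the function $c(A)=2^{-\mu(A)}$ to obtain $\sum_{A\in P^\perp} c(A)=|P^\perp|\sum_{B\in P}\hat c(B)$. The dimension identity (\ref{eqn:dimension}) gives $|P^\perp|=|V|/|P|$, which cancels the $1/|V|$ in Theorem~\ref{th:fourier-of-c} to leave $1/|P|$ in front of the sum over $B\in P$. Noting that $c(0)=2^0=1$, subtracting this term from the left-hand side produces exactly the defining sum for $\WF(P)$ in Definition~\ref{def:WAFOM}, which yields the formula (\ref{eq:WAFOM}).

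Finally, for the complexity claim, observe that once $P$ is enumerated, computing each product $\prod_{T,j}(1+(-1)^{b_{T,j}}2^{-j})$ requires $nS$ real multiplications and a comparable number of additions; the constants $(1\pm 2^{-j})$ may be precomputed once in $O(n)$ time. Summing over the $N=|P|$ elements of $P$ and subtracting $1$ then dividing by $|P|$ contributes only an additional $O(N)$, so the total cost is $O(nSN)$ arithmetic operations. I do not foresee any genuine obstacle: the only substantive step is recognizing that $2^{-\mu(A)}$ and the character $\langle B|A\rangle$ are both tensor products over the $(T,j)$ coordinates, which lets the Fourier integral factor completely; everything else is formal.
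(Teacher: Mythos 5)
Your proposal is correct and follows essentially the same route as the paper: the key step in both is the observation that $2^{-\mu(A)}\langle B|A\rangle$ factors as a product over the $nS$ entries $(T,j)$, turning the sum over $2^{nS}$ matrices into a product of $nS$ two-term sums, after which the corollary follows from the Poisson summation formula (\ref{eq:poisson}) together with $|V|=|P|\cdot|P^\perp|$ and Definition~\ref{def:WAFOM}. Your complexity accounting is slightly more explicit than the paper's, which simply asserts the $O(nSN)$ bound, but there is no substantive difference in approach.
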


%The above formula requires only 
%$O(nSN)$ arithmetic operations on real numbers
%to compute $\WF(P)$ (where $N=|P|$).
%
%Note that the cardinality of $|\cP|$
%is reasonably bounded for practical QMC, 
%since in QMC we need to add $f(\ba)$ for all 
%$\ba \in \cP$.
%
\begin{proof}
By definition of $\hat{c}$,
\begin{eqnarray*}
\hat{c}(B) & = & 
\frac{1}{|V|}\sum_{A \in V} c(A)\langle B|A \rangle =
\frac{1}{|V|}\sum_{A \in V} 2^{-\sum_{1\leq T\leq S, 1\leq j \leq n}ja_{T,j}}\langle B|A \rangle \\
& = & 
\frac{1}{|V|}\sum_{A \in V} 2^{-\sum_{1\leq T\leq S, 1\leq j \leq n}ja_{T,j}}
(-1)^{\sum_{1\leq T\leq S, 1\leq j \leq n}a_{T,j}b_{T,j}} \\
&=&
\frac{1}{|V|}\sum_{A \in V} 
\prod_{1\leq T\leq S, 1\leq j \leq n}[(-1)^{b_T,j}2^{-j}]^{a_{T,j}}\\
&=&
\frac{1}{|V|}
\prod_{1\leq T\leq S, 1\leq j \leq n}
  (1+(-1)^{b_T,j}2^{-j}) \\
\end{eqnarray*}
(the last equality translates a sum over $2^{nS}$ products 
 into a product of $nS$ of 2-term sums),
which proves Theorem~\ref{th:fourier-of-c}.
The first identity in Corollary follows 
from $|V|=|P|\cdot |P^\perp|$
and (\ref{eq:poisson}). Formula~(\ref{eq:WAFOM}) 
follows from this and Definition~\ref{def:WAFOM}.
\end{proof}

A merit of the formula (\ref{eq:WAFOM}) is that
the number of summation depends only on 
$|P|$, not $|P^\perp|=2^{nS-\dim P}$ 
as in Definition~\ref{def:WAFOM}.

For QMC, the size $|P|=2^{\dim P}$ can not
exceed a reasonable number of operations in a computer,
say, $\dim P\leq 40$, since we need to take average of $f$
over $P$. Thus, (\ref{eq:WAFOM}) is practically computable, 
unlike a na\"{i}ve computation of $\WF(P)$ from
Definition~\ref{def:WAFOM}, 
which requires an intractable $2^{nS -\dim P}$ additions
for moderate $n$ and $S$, say $Sn\geq 80$.

\begin{remark}
The star-discrepancy of a point set is
a standard measure of the uniformity of point sets,
but it is hard to compute. Thus, only constructions of
point sets which gives an upper bound on the star-discrepancy
are studied, without explicit value, such as $(t,m,s)$-nets.
In contrast,
the formula (\ref{eq:WAFOM}) makes
WAFOM computable, which enables a random search.

As a preceding study, we note that \cite{HONG} studied 
optimization of the $t$-value of $(t,m,s)$-nets by a randomized 
(evolutionary) algorithm, and often obtained better $t$-values
than the original point sets, such as Sobol point sets
and Niederreiter-Xing point sets.

Another type of measures of regularity of a point set is
diaphony \cite{Zinterhof}\cite[Exercise~5.27]{Kuipers} 
and dyadic diaphony \cite{Hellekalek},
which are computable in $O(SN^2)$ steps.

WAFOM is superior to diaphony
in the order of computational complexity, which is of practical 
interest since $O(N^2)$ steps would be difficult
for iterated random search for large $N$.
Moreover, $\WF(P)$ has a direct error bounding formula 
(\ref{eq:bound-by-WAFOM}).
%???Reference for bound by Diaphony

We note that WAFOM can be defined only for digital nets,
while star-discrepancy and diaphony are measures 
of uniformity defined on any point sets.

As stated in the introduction,
in coding theory, it is often difficult
to construct a family of good codes explicitly, 
and a random search sometimes yield a better result.
WAFOM and Corollary~\ref{cor:bound-by-WAFOM} offer a framework for 
obtaining a good QMC-point set by random search in a similar fashion.
\end{remark}

\section{Some Numerical Evidence}\label{sec:experiments}
\subsection{Point set generators}\label{sec:sequential_generator}
WAFOM is defined for any linear subspace $P\subset V=(\F2^n)^S$.
For fixed $n, S$ and $d:=\dim P$, it is natural to search for $P$ randomly
by uniform choice of its basis.
% Another possibility is to fix a class of $P$.
In this study, we restrict our search to the point sets
generated by an $M$-sequence of a fixed primitive
polynomial of degree $d$, with uniform random search of 
the output transform matrix, which we briefly recall
(see \cite{golomb} for basics on $M$-sequences). We call this type of point set generator a {\em sequential generator}, for which WAFOM can be 
computed in $O(nN)$ steps (see Proposition~\ref{prop:O_nN} below).

Let $(a_1,\ldots,a_d) \in \F2^d$. 
A linear recurring sequence associated to $(a_1,\ldots,a_d)$
is a sequence $x_0, x_1, \ldots \in \F2$ defined by a
recursion in $\F2$:
\begin{equation}\label{eq:linear-rec}
x_{j+d}=a_1x_{j+d-1}+\cdots + a_d x_j,
\end{equation}
with the initial vector $(x_0,\ldots,x_{d-1})$ given.
If the polynomial $t^d+a_1t^{d-1}+\cdots+a_d$ is primitive,
then the period of the sequence is $2^d-1$ for any 
nonzero initial vector. Such a sequence is called 
an $M$-sequence with degree $d$ and exists for any $d$.
The set of all the solutions of 
(\ref{eq:linear-rec}) constitutes an $\F2$-vector space
of dimension $d$, since the initial vector determines
the sequence.
Let us fix a nonzero initial vector. 
By the condition on the period, 
$(x_i,x_{i+1},\ldots,x_{i+d-1})$ is nonzero and distinct 
for $0\leq i \leq 2^d-1$, and hence assumes all nonzero
$d$-dimensional vectors. Thus, the zero sequence and 
$(x_i,x_{i+1},\ldots)$ for $0\leq i \leq 2^d-1$ give 
all of the solutions to (\ref{eq:linear-rec}), which constitute
a $d$-dimensional $\F2$-vector space.

For an integer $S$ and $0\leq k \leq 2^d-1$,
define an $S\times d$ matrix $C_k$ whose
$(T,j)th$ entry is $x_{k+j+T-2}$,  
that is: the first row of $C_k$ is $(x_k, x_{k+1},\ldots,x_{k+d-1})$,
the second row is $(x_{k+1}, x_{k+2},\ldots,x_{k+d})$, etc.,
and the last row is $(x_{k+S-1}, x_{k+S},\ldots,x_{k+S+d-2})$.
By the above observation, the set
$$
W:=\{C_k \ | \ 0 \leq k \leq 2^d-1\} \cup \{0\}
$$
is a $d$-dimensional sub vector space of
the space of $S\times d$ matrices.

To obtain a $d$-dimensional subspace $P \subset V$, 
we choose a $d\times n$ matrix $U$ of rank $d$,
and compute the image $WU$ of $W$ in $V$ by
the multiplication of $U$ from the right.
In other words, we compute $S \times n$ matrices
$C_kU$ for each $k$ ($0\leq k\leq 2^d-1$) 
and append the $0$ matrix to obtain $WU \subset V$.
We use $WU$ for our point set $P$. By changing $U$, 
we have various $P$.

An advantage of such a construction is in the efficiency 
for generating points. Because the first $S-1$ rows
of $C_{k+1}U$ are the same with the last $S-1$ rows
of $C_kU$, to compute the $(k+1)$-st point,
we need only to compute the last row of $C_{k+1}U$.
To do this, we compute
$(x_{k+S}, x_{k+S+1},\ldots,x_{k+S+d-1})$
by (\ref{eq:linear-rec}) and multiply by $U$ from the right.

Another advantage of sequential generators is 
in computing WAFOM.
In Theorem~\ref{th:fourier-of-c}, 
we need to compute 
$$
\hat{c}(B)
=\prod_{1\leq T \leq S}\left[\prod_{1\leq j \leq n} (1+(-1)^{b_{T,j}}2^{-j})\right].
$$
When $\bb_T$ denotes the $T$-th row of $B$, this can be written as
$$
\hat{c}(B)=\prod_{1\leq T \leq S}\left[\prod_{1\leq j \leq n} h(\bb_T)\right],
$$
where $h$ is defined in an obvious fashion. 
Since $C_{k+1}U$ is obtained from $C_kU$ by 
removing the first row and attaching the last row, 
we may obtain $\hat{c}(C_{k+1}U)$ by simply 
multiplying $\hat{c}(C_kU)$ by 
$h(\mbox{attached row})/h(\mbox{removed row})$.
Hence, if we record the values of $h$ for past $S$ rows then only one multiplication and one division is necessary. Thus we obtain the following.
\begin{proposition}\label{prop:O_nN}
For a point set generated by a sequential generator, 
WAFOM is computable in $O(nN)$ steps.
\end{proposition}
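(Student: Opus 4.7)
The plan is to apply the inversion formula~(\ref{eq:WAFOM}) from Corollary~\ref{cor:bound-by-WAFOM} and exploit the sliding-window structure of the sequential generator. The first observation is that the double product in~(\ref{eq:WAFOM}) factorizes along rows: if $B$ has rows $\mathbf{b}_1,\dots,\mathbf{b}_S \in \F2^n$, then
\[
\prod_{T,j}\bigl(1+(-1)^{b_{T,j}}2^{-j}\bigr) \;=\; \prod_{T=1}^{S} h(\mathbf{b}_T), \qquad h(\mathbf{r}) := \prod_{j=1}^{n}\bigl(1+(-1)^{r_j}2^{-j}\bigr),
\]
and a single evaluation of $h$ costs $O(n)$ real arithmetic operations. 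Moreover each factor lies in $[1/2,\,3/2]$, so $h(\mathbf{r}) \neq 0$ for every $\mathbf{r}$, which will be needed to justify division in the update step.

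Next I would enumerate $P = \{C_k U : 0 \le k \le N-2\} \cup \{0\}$ in the natural $M$-sequence order, maintaining two auxiliary objects: a circular buffer storing the $h$-values of the $S$ current rows, and a running scalar equal to $\prod_T h(\mathbf{b}_T)$ for the current matrix. By construction, $C_{k+1}U$ is obtained from $C_kU$ by dropping the top row and appending a new bottom row, the new row itself being produced by one step of the recurrence~(\ref{eq:linear-rec}) followed by multiplication against $U$. The update therefore consists of: evaluating $h$ once on the new row ($O(n)$ operations), dividing the running product by the $h$-value leaving the buffer, and multiplying by the new $h$-value ($O(1)$ operations). After each update, the contribution $\prod_T h(\mathbf{b}_T)-1$ is added to a WAFOM accumulator in $O(1)$ operations; the single point $B=0$ contributes $(1+2^{-1})\cdots(1+2^{-n})$ per coordinate and is handled once at the end.

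Summing costs, the initial matrix $C_0 U$ requires $O(nS)$ setup to populate the buffer, each of the $N-1$ subsequent updates costs $O(n)$, and the final normalization by $1/|P|$ is $O(1)$; since $S$ is a fixed dimension (and in any event $S\le N$), the total is $O(nN)$. The one place I would pause is on the cost of generating the new row of $C_{k+1}U$: this requires $O(d)$ $\F2$-operations for the recurrence and an $\F2$-vector-times-matrix product, which under the standard convention of counting real arithmetic operations is not charged against the $O(nN)$ bound; if one wishes to charge bit operations explicitly, the $\F2$-work is itself $O(dN) = O(N\log N)$ and so does not dominate. The main obstacle is therefore purely bookkeeping --- ensuring that every row's $h$-value is computed exactly once and recycled correctly through the buffer --- rather than anything analytically delicate.
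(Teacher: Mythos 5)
Your proposal is correct and follows essentially the same route as the paper: factor $\hat{c}(B)$ into a product of row-wise quantities $h(\bb_T)$, then exploit the fact that $C_{k+1}U$ differs from $C_kU$ by one dropped and one appended row, so each update costs one $O(n)$ evaluation of $h$ plus one multiplication and one division. The extra details you supply (non-vanishing of $h$, the separate handling of $B=0$, and the accounting of the $\F2$-arithmetic) are correct refinements of the paper's sketch rather than a different argument.
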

In practice, this multiplication and division accumulate
the truncation error.  One way to avoid this would be to divide
the sequence into moderate length subsequences,
and to apply this trick for each subsequence.

\begin{remark}
We are not sure whether such a choice of point sets
harms our ability to attain a small value of WAFOM, 
compared to a random search of $P$ by basis.  
%??? existential result.
\end{remark}

\subsection{Finding good point sets}
\label{sec:finding}

For an application, it is necessary to have a $P$ with a small
$\WF(P)$. As shown in the following experiments, 
even a na\"{i}ve random search
turns out to find a good $P$.
%Namely, we randomly choose a rank $d$ matrix $U$
%and compute $P=WU$ as described in 
%\S\ref{sec:sequential_generator}, evaluate $\WF(P)$,
%and then choose the best $P$ among those searched.

According to 
Lemma~\ref{lemma:QMC_discretization} and the discussion there, 
we fix $n=30$. 
We focus on the integrand
function described in \S~\ref{sec:performance} with dimension $S=4$.
Our search for a good point set proceeded in two stages for ease. 
The range of the $\F2$-dimension of $P$ is
$d \in \{10, 11, \hdots, 22\}$.  Fix a $d$ in this range.
At the first stage, we generate a $d\times d$ matrix $U'$
of rank $d$ uniformly at random 5000 times. 
For each $U'$, we generate $WU'\subset (\F2^d)^S$ 
as in \S\ref{sec:sequential_generator},
then compute WAFOM of $WU'$ with degree of discretization $d$.
Thus, we compute 5000 WAFOMS, and identify the best $U'$.
At the second stage, we generate an $S \times (n-d)$ matrix
uniform at random, concatenate it from the right to this best $U'$
to obtain an $S \times n$ matrix $U$. Then we compute 
the WAFOM of $WU \subset (\F2^n)^S$. We iterate this
2000 times, and take the best $P=WU$ with respect 
to $\WF(P)$.

The coefficients of primitive polynomials $(a_1, \hdots, a_d)$ were 
generated at random, bit by bit, with a bias towards ones: 
the heuristic being that if many coefficients are zero 
then the point set will satisfy a linear relation with
a small number of terms, which may be harmful to 
the effectiveness of QMC.  
For each $d$, we used the same primitive 
polynomial of degree $d$ for all matrices $U$.

Figure~\ref{fig:WAFOM} plots the smallest WAFOM 
values found by this procedure for each $d$. 
An ordinary least squares estimates of the slope 
suggests that the best point sets in a random search 
achieve an order of convergence something like $O(N^{-1-\beta})$ 
where $\beta \approx 1$ in the range of $10\leq d \leq 22$. 
The first $2^d$ points of a Sobol sequence are a digital net 
over $\mathbb{F}_2$, and therefore form 
an $\mathbb{F}_2$-linear point set, 
such that we can compute the WAFOM; 
it is included as the dashed line for comparison.  
The Sobol sequence, as well as the 
Faure and Halton sequences plotted in Figure~\ref{fig:error}, 
come from the open source C++ library quantlib \cite{web:quantlib}.

\begin{figure}
  \begin{center}
      \includegraphics[width=0.8\textwidth]{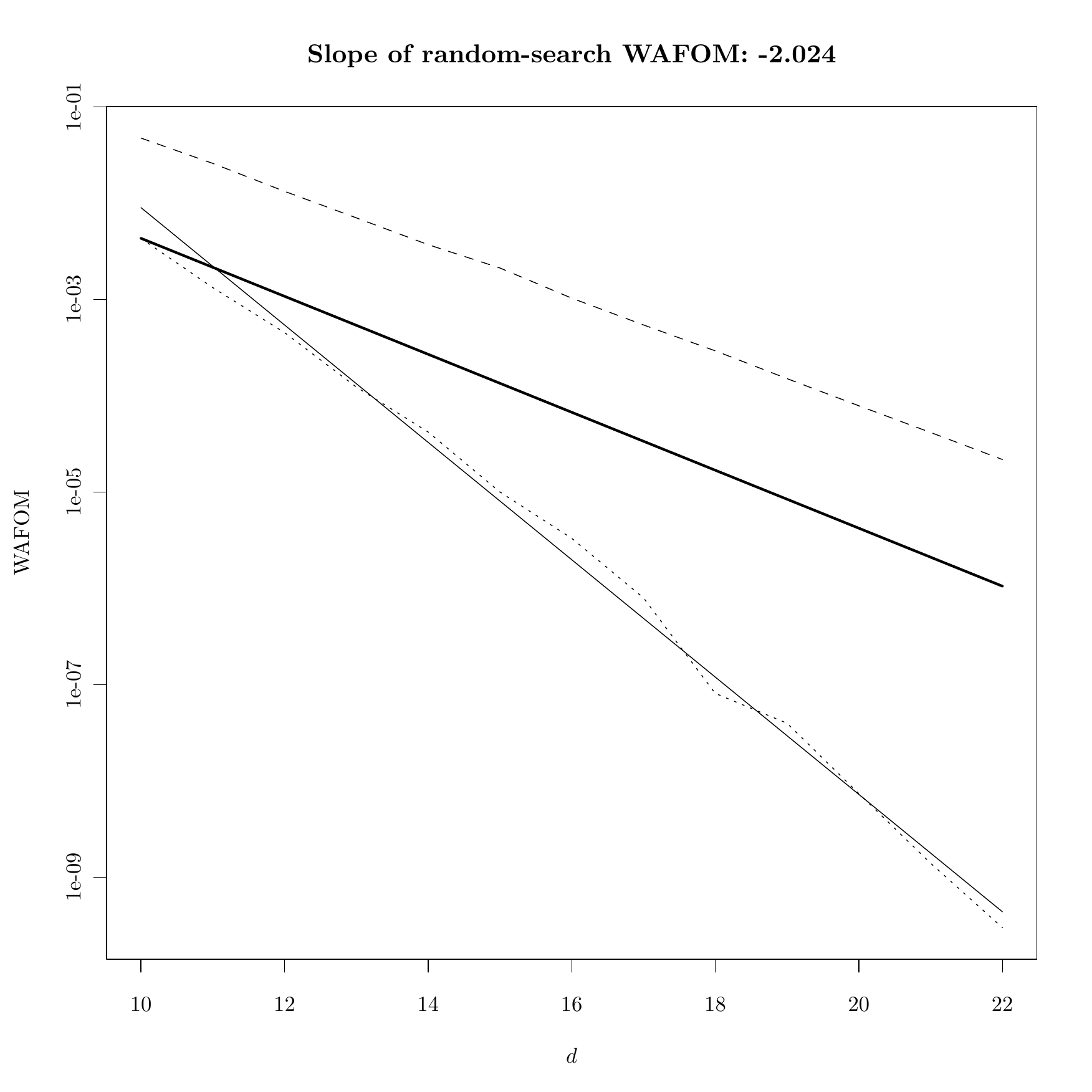}
    \caption{WAFOM (in log scale) for selected point sets.  The WAFOM of the Sobol sequence is the dashed line above and the dotted line below is the WAFOM of random search point sets.  The thin solid line estimates the rate of convergence.  The heavy black line has a slope of -1 for reference.  }
    \label{fig:WAFOM}
  \end{center}
\end{figure}

\begin{remark}

Dick \cite{Dick-MCQMC} has shown a construction of
a family of point sets with $\WF(P)=O(N^{-\alpha}(\log N)^{\alpha S})$ 
for arbitrary $\alpha>1$. This implies that, if we 
exhaustively search all of the linear point sets $P$ for every $d$ 
and plot their smallest WAFOM, then we can draw a line with slope $-\alpha$
that is above all the plotted points. 
The line of smallest WAFOM values in Figure~\ref{fig:WAFOM} seems to be convex above
(with an exception at $d=18$), which is in accordance with Dick's result.

For practical QMC, $N$ is
bounded above, and Figure~\ref{fig:WAFOM} might
show the practical upper bound of the value $\alpha$.
\end{remark}

\subsection{Performance in QMC integration} \label{sec:performance}

To illustrate the effectiveness of this method for improving the
efficiency of QMC integration, we consider a basic problem from
computational finance: the pricing of an Asian option.  Readers not interested in some context can skip directly to (\ref{eqn:asian_integral}),
where a problem is formulated directly in the form of (\ref{eqn:integral}).

The Asian option is a simple example of a \emph{path dependent option}, meaning that the payoff depends upon the path taken to the terminal point.  In this case, the payoff is some average price of the underlying until maturity.  This dependence on the value of the underlying at several points in time make its valuation a difficult problem, both analytically and computationally. For low dimensional or highly specialized
cases numerical solution of an associated partial differential equation is quite efficient.  For extensions beyond this, however, the preferred methodology for pricing Asian options is (quasi-) Monte Carlo simulation.

 %Practically, the Asian option is quite relevant for investing, especially in thinly traded markets, where the averaging helps to reduce idiosyncrasy.  Furthermore, the payoff is less variable than an equivalent European option, making it easier to hedge for the option seller, and cheaper to purchase for the option buyer.

% While there has been some progress in deriving analytical formulas for
% the price of a continuously-sampled Asian options
% There is no general
% notion for a discretely-sampled Asian option in the Black-Scholes market
% model.  Path dependent options can, in reality, only be priced
% contingent upon discrete observations, so that there is considerable
% interest in numerically approximating the price of such an instrument.

Here we endeavor to price an option on an asset having a geometric Brownian motion dynamics with a current price of $P_0$, (exponential) volatility $\sigma^2$, strike $K$, and maturity $T$ year.  The riskless discount rate is $r$.  Thus, the price of an Asian option with maturity $T$ sampled at times $Ti/S$, $i=1, \hdots, S$ is

\begin{align}
\label{eqn:asian}
e^{-rT} E^*\left[\left( \frac{1}{S} \sum_{i=1}^S P_{Ti/S}  - K \right)_+  \right],
\end{align}
where $E^*$ denotes expectation with the (logarithmic) drift of $P$ replaced by $r$ and $x_+ = \max(x,0)$.  Under the risk neutral measure, we can write
\begin{align}
P_t = P_0e^{\left(r - \frac{\sigma^2}{2} \right)t + \sigma W_t},
\end{align}
where $W_t$ is a standard Wiener process.  Inverting the Gaussian distribution, $\Phi$, we can map a quasirandom uniform $[0,1)$ variate to an increment of $P_t$ by

\begin{align}
x \mapsto P_0e^{\left(r - \frac{\sigma^2}{2} \right)t + \sigma \sqrt{t} \Phi^{-1}(x) }.
\end{align}
And to a $\mathbf{x} \in [0,1)^4$ we can associate a realization of
 (\ref{eqn:asian})
 using the independent increments property of Brownian motion.  In particular, for the purposes of QMC integration, we write (\ref{eqn:asian}) as:

\begin{align}
\label{eqn:asian_integral}
\int_{[0,1)^S} e^{-rT} \left( \frac{1}{S} \sum_{i=1}^S P_0e^{\left( r- \frac{\sigma^2}{2} \right)t + \sigma \sum_{j=1}^i \sqrt{Tj/S} \Phi^{-1}(x_j)} -K \right)_+ d \mathbf{x}.
\end{align}

As noted in Remark~\ref{rem:midpoint}, our point set is a subset of $\{0, 1/2^n, \hdots, 1-1/2^n\}^S$, so in our experiment we generate standard normal variates by $x \mapsto \Phi^{-1}(x + 1/2 \times 2^{-n})$ to center the point within each hypercube.  

The QMC integration errors are presented in Figure~\ref{fig:error}, where our random search method is seen to compare favorably with classical QMC point sets.  To evaluate the ``true'' option price, we performed QMC integration using a very large classical low discrepancy point set.

\begin{figure}
  \begin{center}
                       \includegraphics[width=0.8\textwidth]{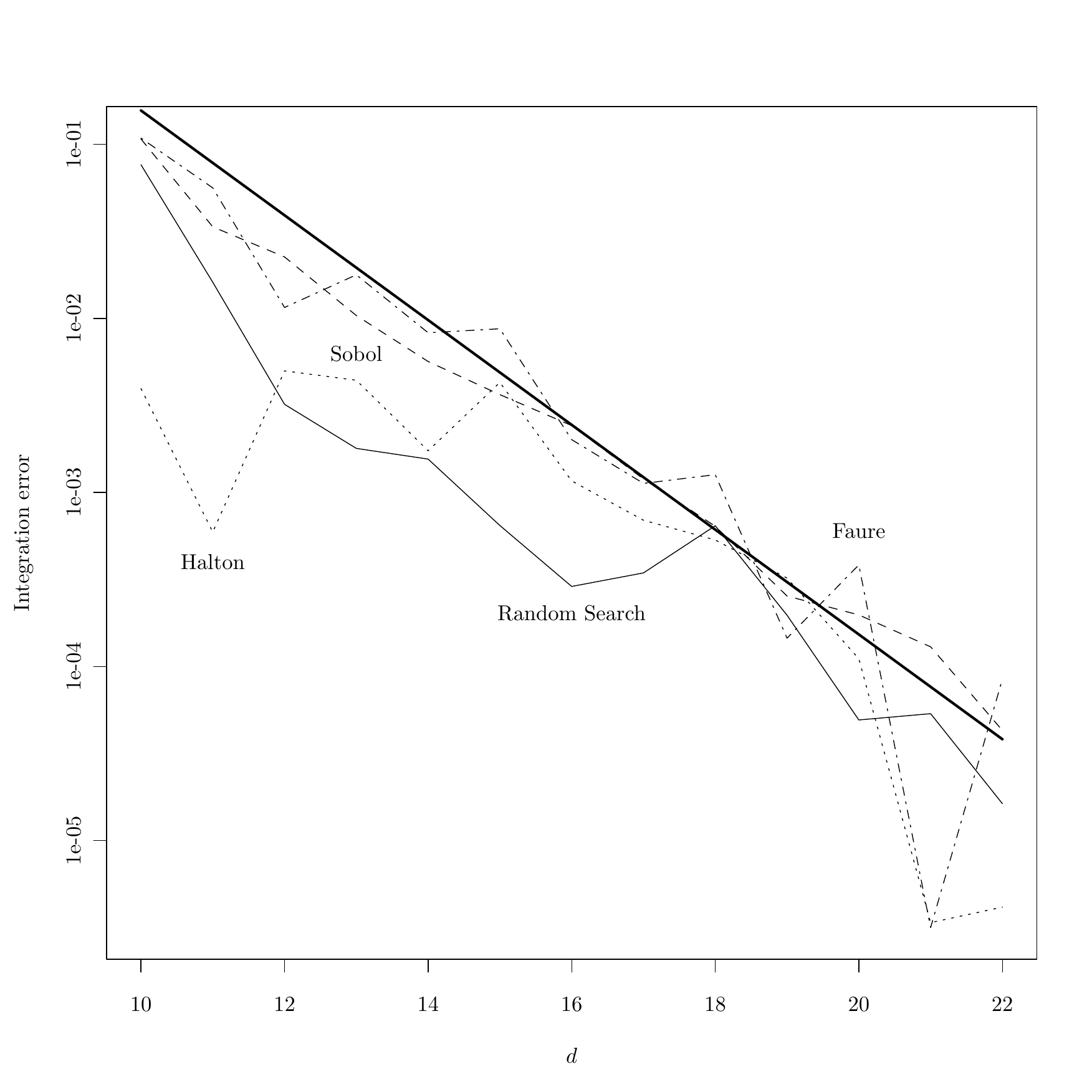}
    \caption{Integration error for the lowest WAFOM generators found by
   the procedure described in Section~\ref{sec:finding} (thin solid line).  
The error of several classical QMC point sets (Halton is dotted, Sobol is dashed, Faure is alternating dots and dashes) is included for comparison.  A heavy black line has slope of -1.}
    \label{fig:error}
  \end{center}
\end{figure}

\section{Conclusion}
We proposed Walsh figure-of-merit (WAFOM, Definition~\ref{def:WAFOM})
for those point sets $P$
constructed from $\F2$-linear spaces by digital nets, following some
earlier work by Dick \cite{Dick-Walsh, Dick-MCQMC}.

WAFOM is a measure of regularity of the point set, with a mathematical 
support by a Koksma-Hlawka type inequality (\ref{eq:bound-by-WAFOM})
due to Dick. The WAFOM of a
point set $P \subset (\F2^n)^S$ is quickly computable.
Because evaluating this measure requires only $O(nSN)$ operations
(\ref{eq:WAFOM}), 
%\mrevise{for $N=|P|$} <- this was already defined, though maybe we do want to state it again for a self-contained conclusion
a random search for generators having good WAFOM is a feasible
undertaking.
Applying even the crude method (\S\ref{sec:finding})
for $|P|=2^d$, 
$10\leq d\leq 22$, we found point sets with small WAFOM
(Figure~\ref{fig:WAFOM})
with order $O(N^{-1-\beta})$ for $\beta \approx 1$.

These point sets showed
excellent integration performance on 
an important problem for computational finance
(Figure~\ref{fig:error}),
even better 
than some standard QMC, namely, Sobol, Halton and Faure.
In the experiments, the integrand is not differentiable.
This is a supportive evidence of the effectiveness of WAFOM for
even non smooth functions, which is not covered by %Dick's 
Theorem~\ref{th:estimation-by-mu}.

\subsection*{Acknowledgements}
The authors are thankful to Professor Niederreiter and 
two anonymous referees for valuable comments and 
for making us aware of some earlier literature, 
and to Professor Hickernell for letting us know \cite{HONG}.

%\nocite{*}

\bibliographystyle{amsplain}

\bibliography{sfmt-kanren}

%\begin{received}
%Received Month Year;
%revised Month Year; accepted Month Year
%\end{received}

\end{document}